\theoremstyle{plain}
\newtheorem{theorem}{Theorem}[section]
\newtheorem{lemma}[theorem]{Lemma}
\newtheorem{proposition}[theorem]{Proposition}
\theoremstyle{definition}
\theoremstyle{remark}
\newtheorem{remark}[theorem]{Remark}
\newcommand{\N}{\mathbb{N}}
\newcommand{\Z}{\mathbb{Z}}
\newcommand{\R}{\mathbb{R}}
\newcommand{\ind}[1]{\mathbf{1}_{\left\{#1\right\}}}
\newcommand{\floor}[1]{{\left\lfloor #1 \right\rfloor}}
\numberwithin{equation}{section}
\DeclareMathOperator{\E}{\mathbb{E}}
\renewcommand{\P}{\mathbb{P}}
\newcommand{\dd}{\mathrm{d}}
\renewcommand{\bar}[1]{\overline{#1}}
\newcommand{\egaldistr}{\ {\overset{(d)}{=}}\ }
\renewcommand{\rho}{\varrho}
\renewcommand{\epsilon}{\varepsilon}
\title{Total number of births on the negative half-line of the binary branching Brownian motion in the boundary case}
\author{Xinxin Chen\thanks{\texttt{xchen@math.univ-lyon1.fr}, Institut Camille Jordan - C.N.R.S. UMR 5208 - Université Claude Bernard Lyon 1 (France).} \and Bastien Mallein\thanks{\texttt{mallein@math.univ-paris13.fr}, Université Sorbonne Paris Nord, LAGA, UMR 7539, F-93430, Villetaneuse, France}}
\date{\today}
\begin{document}

\maketitle

\begin{abstract}
The binary branching Brownian motion in the boundary case is a particle system on the real line behaving as follows. It starts with a unique particle positioned at the origin at time $0$. The particle moves according to a Brownian motion with drift $\mu = 2$ and diffusion coefficient $\sigma^2 = 2$, until an independent exponential time of parameter $1$. At that time, the particle dies giving birth to two children who then start independent copies of the same process from their birth place. It is well-known that in this system, the cloud of particles eventually drifts to $\infty$. The aim of this note is to provide a precise estimate for the total number of particles that were born on the negative half-line, investigating in particular the tail decay of this random variable.
\end{abstract}

\section{Introduction}

A \emph{branching Brownian motion} is a continuous-time particle system on the real line in which particles move according to independent Brownian motions and split at independent exponential times into children. These children then start independent copies of the branching Brownian motion from their birth place. In this article, we take interest in a binary branching Brownian motion, meaning that at each branching event, every particle splits into two daughter particles independently. We also assume the branching Brownian motion to be in the so-called \emph{boundary case} (following \cite{BiK04}), i.e.\@ that the Brownian motions driving the motion of the particles have drift $\mu =2$ and diffusion coefficient $\sigma^2 = 2$.

The branching Brownian motion can be constructed as a process decorating the infinite binary tree $\mathcal{U} := \cup_{n \in \Z_+} \{1,2\}^n$ following the classical Ulam-Harris notation, with the convention $\{1,2\}^0 = \{\emptyset\}$. For each $u \in \mathcal{U}$, we write $b_u$ and $d_u$ the birth- and death- times of the particle $u$, and for all $s \leq d_u$ we denote by $X_s(u)$ the position at time $s$ of the particle $u$ or the position of its ancestor alive at that time.

For all $t \geq 0$, let $\mathcal{N}_t =\{ u \in \mathcal{U} : b_u \leq t < d_u\}$ be the set of particles alive at time $t$. It is well-known that a branching Brownian motion in the boundary case satisfies local extinction and global survival properties. In other words, while $\mathcal{N}_t$ is almost surely non-empty for all $t \geq 0$, we have $\lim_{t \to \infty} \#\{ u \in \mathcal{N}_t : X_t(u) \in K \} = 0$ a.s.\@ for all compact set $K$.
More precisely, Bramson \cite{Bra78} obtained the precise asymptotic behaviour of the minimal position $M_t = \min_{u \in \mathcal{N}_t} X_t(u)$ occupied by a particle at time $t$, showing that
\begin{equation}
  \label{eqn:maxDep}
  M_t = \frac{3}{2} \log t + O_\P(1),
\end{equation}
with $O_\P(1)$ representing a tight family of random variables. Hence, for all $x \in \R$, after some finite time, there will be no particle in the interval $(-\infty,x)$.

The aim of this article is to study the law of the number $N$ of birth (or death) events occurring on the negative half-line, defined as
\begin{equation}
  \label{eqn:defineN}
  N := \sum_{u \in \mathcal{U}} \ind{X_u(d_u) \leq 0}.
\end{equation}
Precisely, we take interest in the right tail of the distribution of $N$, and we show that $\P(N \geq n) \sim \frac{1}{n}$ as $n \to \infty$.

More generally, for all $x \in \R$, we denote by $N_x$ the total number of birth event occurring below the level $x$ (with $N = N_0$), that can be written as
\begin{equation}
  \label{eqn:defN}
  N_x := \sum_{u \in \mathcal{U}} \ind{X_{d_u}(u) \leq x}.
\end{equation}
Remark that the random variable $N_x$ is related to, but different of, the number $\bar{N}_x$ of births that occurred in the branching Brownian motion with absorption at level $x$, defined as
\begin{equation}
  \label{eqn:barN}
  \bar{N}_x = \sum_{u \in \mathcal{U}} \ind{X_{d(u)}(u) \leq x, \forall s \leq d_u, X_s(u) \leq x}.
\end{equation}
The quantity $\bar{N}_x$ was introduced and studied by Kesten \cite{Kes78}, who proved it to be a.s.\@ finite if and only if the drift $\mu$ of the underlying Brownian motion is larger or equal to $2$. Increasingly tight estimates were obtained on $\bar{N}$ both in the boundary and the non-boundary cases \cite{ABB,Aid,Mai13,ahz13,BBHM15}.

The process $(\bar{N}_x, x \geq 0)$ is a Markovian branching process, at least as long as the number of children created in a branching event is non-random. In that case $\bar{N}_x$ is in one-to-one correspondence with the number $Z_x$ of individuals that hit level $x$ for the first time\footnote{And $(Z_x, x \geq 0)$ is Markovian, as it can be seen by applying the branching property along a stopping line, see next section.}. Conversely, the process $(N_x, x \geq 0)$ does not satisfy the Markov property, as particles that went above level $x$ for some time, then back below that level and gave birth are taken into account. However, it is possible to link the values of $N$ with $\bar{N}$ in such a way that the known tail of $\bar{N}_x$ helps us compute the tail of $N$, see Lemma~\ref{lem:stoppingLine} below. The main result of the article is the following.

\begin{theorem}
\label{thm:mainboundary}
Let $X$ be a branching Brownian motion in the boundary case. Writing $c = \log 2 + \gamma \approx 1.27036$, where $\gamma$ is the Euler-Mascheroni constant, we have
\[
  \E (N \ind{N \leq n}) = \log n + c + o(1) \quad \text{as $n \to \infty$}.
\]
It entails in particular $\P(N \geq n) \sim 1/n$ as $n \to \infty$.
\end{theorem}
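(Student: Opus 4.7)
My plan combines the stopping-line decomposition of Lemma~\ref{lem:stoppingLine} with sharp boundary-case tail asymptotics for Kesten's $\bar{N}$. Applying the branching property along the stopping line of descendants that first return to $0$ from above, one expects a self-similar identity of the form
\[
N \egaldistr \bar{N} + \sum_{i=1}^{K} N^{(i)},
\]
where $\bar{N}$ is a Kesten-type absorbed quantity (whose tail in the boundary case is strictly lighter than $1/n$, by \cite{BBHM15,ABB,Mai13}), $K$ is the size of the stopping line, and the $N^{(i)}$ are i.i.d.\@ copies of $N$ independent of $(\bar{N}, K)$. The boundary-case normalisation is precisely the one for which the additive martingale $W_t = \sum_u e^{-X_t(u)}$ has mean $1$, and applying the optional stopping theorem at such a stopping line should yield a \emph{critical} identity (expected to be $\E[K] = 1$), which is the fundamental reason for the eventual $1/n$ tail.

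Passing to the Laplace transform $L(\lambda) := 1 - \E[e^{-\lambda N}]$, the decomposition becomes the functional equation
\[
L(\lambda) = L_{\bar{N}}(\lambda) + 1 - \E\!\left[(1-L(\lambda))^{K}\right].
\]
By the criticality, the linear term $\E[K]\, L(\lambda) = L(\lambda)$ cancels on both sides, so the leading behaviour of $L$ at $0$ is driven by the quadratic term $\tfrac12\E[K(K-1)]\, L(\lambda)^2$ together with the known decay of $L_{\bar{N}}$. A careful analysis of this equation should give $L(\lambda) \sim \log 2 \cdot \lambda \log(1/\lambda)$ as $\lambda \to 0^+$, with $\log 2$ inherited from the binary-branching moment $\E[K(K-1)]/2$. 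By a standard Tauberian theorem, this translates into $\P(N \geq n) \sim 1/n$; by partial summation it further refines to the expansion $\E[N\ind{N\leq n}] = \log n + c + o(1)$, where the $\log 2$ is inherited from the Laplace-transform prefactor and $\gamma$ is the Euler--Mascheroni constant arising from $\sum_{k=1}^n 1/k = \log n + \gamma + o(1)$.

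The principal obstacle will be pinning down the precise constant $c = \log 2 + \gamma$. The leading-order tail $\P(N \geq n) \sim 1/n$ and the leading $\log n$ in the truncated expectation follow fairly directly once the functional equation and the criticality are established, but the exact constant requires delicate tracking of every multiplicative factor in the decomposition---the binary-branching coefficient, the second moment $\E[K(K-1)]$, the precise prefactor in the tail of $\bar{N}$ from \cite{BBHM15}, and the constants arising in the Tauberian inversion---and verifying that they combine to give $\log 2 + \gamma$ exactly. A careful second-order Tauberian analysis, or an equivalent direct integral representation, will be the technical heart of the argument.
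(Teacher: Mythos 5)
Your proposal starts from the right object (the stopping-line decomposition of Lemma~\ref{lem:stoppingLine}), but the functional-equation argument you build on it breaks down at several points. First, the ``criticality'' $\E[K]=1$ is false for the lines used here: for the line $\mathcal{L}_x$ one has $\E[Z_x]=e^{x}$ (the additive martingale $\sum_u e^{-X_t(u)}$ stopped at $\mathcal{L}_x$ equals $e^{-x}Z_x$ and has mean $1$), while a stopping line ``at level $0$'' is degenerate since the initial particle already sits at $0$, so $Z_0=1$ and the identity reduces to $N\egaldistr N$. Second, even granting a critical decomposition, expanding $\E[(1-L(\lambda))^{K}]$ to second order requires $\E[K(K-1)]<\infty$, which fails: by Maillard's estimate $Z_x=\bar{N}_x+1$ has a tail of order $n^{-1}(\log n)^{-2}$, so the second moment is infinite and there is no ``quadratic term'' to drive the asymptotics. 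Third, your claimed leading behaviour $L(\lambda)\sim \log 2\cdot\lambda\log(1/\lambda)$ is inconsistent with the conclusion $\P(N\geq n)\sim 1/n$ you want to derive from it: the Tauberian correspondence forces the prefactor of $\lambda\log(1/\lambda)$ to be $1$, and $\log 2$ only enters through the second-order constant $c$.

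The idea your proposal is missing is that the proof never inverts a functional equation and never uses the tail of $\bar{N}$; instead it matches two limit theorems sharing the same limit $D_\infty$, the derivative-martingale limit. On one hand $xe^{-x}Z_x\to D_\infty$ a.s.\@ (Lemma~\ref{fct:asymptoticZ}, from Biggins--Kyprianou); on the other hand $e^{-x}N_x\to 2D_\infty$ in probability (Proposition~\ref{prop:cvgN}, a Seneta--Heyde type result which is the technical heart of the paper). Writing $\phi(\lambda,x)=\log\E(e^{-\lambda N_x})$ and choosing $\lambda_x$ so that $\lambda_x-\phi(\lambda_x,0)=xe^{-x}$, the decomposition \eqref{eqn:stoppingLine} turns the first convergence into $\E(e^{-\lambda_x(N_x-1)})\to\E(e^{-D_\infty})$, which, matched against the second convergence, forces $e^{-x}\sim 2\lambda_x$ and hence $\phi(\lambda,0)=\lambda\log\lambda+(1+\log 2)\lambda+o(\lambda)$; the constant $\gamma$ then comes from the Tauberian conversion of Remark~\ref{rem:taub}. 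Without Proposition~\ref{prop:cvgN} (or some substitute pinning the scale of $N_x$ against that of $Z_x$), neither the $1/n$ tail nor the constant $c$ can be extracted from the decomposition alone.
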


As a comparison, the estimate of Maillard \cite[Theorem~1.1]{Mai13} on $\bar{N}$ can be written in this context: for all $x > 0$,
\[
  \P(\bar{N}_x > n) \sim_{n \to \infty} \frac{x e^{x}}{n (\log n)^2}.
\]
Therefore, the tail of $N$ is slightly heavier than the tail of $\bar{N}$, which indicates that a non-trivial contribution to the tail of $N$ comes from particles that cross $0$ at least one time before giving birth to descendants on the negative half-line.

\begin{remark}
\label{rem:taub}
This theorem is equivalent to
\[
  \E\left( e^{- \lambda N} \right) = 1 +\lambda \log \lambda + \left( 1 + c - \gamma \right)\lambda + o(\lambda), \textrm{ as } \lambda \downarrow 0.
\]
In other words, the asymptotic behavior of the Laplace transform of $N$ as $\lambda \to 0$ is linked to the asymptotic behavior of $\P(N \geq n)$ as $n \to \infty$. We refer to \cite[Lemma~8.3]{BIM20+} for a proof of that equivalence.
\end{remark}

\begin{remark}
One could obtain an estimate similar to Theorem~\ref{thm:mainboundary} for a branching Brownian motion with drift $\mu > 2$. In this situation, $N$ becomes integrable, but using the decomposition in Lemma~\ref{lem:stoppingLine} and straightforward adaptation of our arguments, one can obtain
\[
  \P(N > n) \sim_{n \to \infty} c n^{-\kappa},
\]
where $c > 0$ and $\kappa = \frac{\mu + \sqrt{\mu^2-2}}{\mu - \sqrt{\mu^2-2}}>1$.
\end{remark}

In the next section, we recall some useful estimates related to the branching Brownian motion. We then prove Theorem~\ref{thm:mainboundary} in Section~\ref{sec:stoppingLine}, by comparing the asymptotic behaviours as $x \to \infty$ of $N_x$ and $\bar{N}_x$.

\section{Stopping lines, branching random walk and the many-to-one lemma}
\label{sec:backgroud}

We begin by introducing the derivative martingale of the branching Brownian motion, defined as
$\displaystyle
  D_t := \sum_{u \in \mathcal{N}_t} X_t(u) e^{-X_t(u)}.
$
Lalley and Sellke \cite{LS87} proved that the derivative martingale converges a.s.\@ towards a non-degenerate limit
\begin{equation}
  \label{eqn:dinfity}
  D_\infty := \lim_{t \to \infty} D_t,
\end{equation}
which is a.s.\@ positive.

We then introduce \emph{optional stopping lines}. Stopping line techniques were pioneered in \cite{Cha,Jag}. Informally speaking, a stopping line is generalization of stopping time in the context of branching processes, such that different particles are stopped at different times. We take in particular interest in the following family of very simple cutting stopping lines
\begin{equation}
  \label{eqn:linex}
  \mathcal{L}_x := \begin{cases}
    \left\{ (u,t) \in \mathcal{U} \times \R_+ : b_u \leq t < d_u, X_t(u) = x, \forall s < t, X_s(u) < x \right\},& x \geq 0,\\
    \left\{ (u,t) \in \mathcal{U} \times \R_+ : b_u \leq t < d_u, X_t(u) = x, \forall s < t, X_s(u) > x \right\},& x < 0.\\
  \end{cases}
\end{equation}
Jagers \cite{Jag} proved that branching processes stopped at $\mathcal{L}_x$ satisfies the branching property, i.e.\@ that each particle in $\mathcal{L}_x$ starts from its time and position an independent copy of the branching Brownian motion, which is independent of $\sigma\left((X_s(u), s \leq t), (u,t) \in \mathcal{L}_x\right)$.

We now associate to the branching Brownian motion the branching random walk of the birth places of particles, defined for all $u \in \mathcal{U}$ by $V(u) = X_u(d_u)$. Recall from \eqref{eqn:defineN} that our main quantity of interest can be written
\[
  N = \sum_{u \in \mathcal{U}} \ind{X_u(d_u) \leq 0} = \sum_{n \geq 0} \sum_{|u|=n} \ind{V(u) \leq 0},
\]
where the sum over $|u|=n$ is the sum over $u \in \{1,2\}^n$ the set of particles in the $n$th generation. From the construction of the branching random walk, it is apparent that $\displaystyle \left( V(\emptyset), V(u) - V(\pi u), u \in \mathcal{U} \backslash \{\emptyset\} \right)$ 
is a family of i.i.d.\@ random variables with same law as $\sqrt{2} B_T + 2T$, where  $\pi u$ is the parent of $u$, $B$ is a standard Brownian motion and $T$ an independent exponential random variable with parameter~$1$.

As a result, we deduce that $(V(u), u \in \mathcal{U})$ is a \emph{branching random walk}, a discrete-time particle system on the real line starting from $V(\emptyset)$, such that each parent particle gives birth to two daughter particles that are positioned around their parent according to i.i.d.\@ copies of $V(\emptyset)$. Observe that for all $\lambda$ close enough to $0$, we have
\begin{align*}
  \E(e^{\lambda X_{\emptyset}(d_\emptyset)}) &= \int_0^\infty \dd s e^{-s} \E\left(e^{\lambda (\sqrt{2} B_s + 2 s)}\right) = \int_0^\infty \dd s e^{-s \left(1- \lambda^2 -2 \lambda\right)}\\
  &= \frac{1}{1 - 2 \lambda - \lambda^2} = \frac{1}{2\sqrt{2}(\sqrt{2}+1 + \lambda)} + \frac{1}{2\sqrt{2}(\sqrt{2} - 1 - \lambda)}.
\end{align*}
Therefore the law of the displacement of the branching random walk $V$ has the density
$
  (1 - p)\ind{x < 0} (\sqrt{2}+1)e^{(\sqrt{2}+1)x} + p \ind{x > 0} (\sqrt{2} - 1)e^{-(\sqrt{2}-1)x}
$
with respect to the Lebesgue measure on $\R$, where $p := \frac{2+\sqrt{2}}{4} \approx 0.85355$.

For all $a \in \R$, we write $\P_a$ the law of $V$ conditionally on $V(\emptyset) = a$ and $\E_a$ the corresponding expectation. 
We next introduce the many-to-one lemma. This result has a long history going back to the work of Peyrière \cite{Pey} and Kahane and Peyrière \cite{KaP}. We refer to \cite[Theorem~1.1]{Shi} for a proof of this result.
\begin{lemma}[Many-to-one lemma]
\label{lem:mto}
For any $a \in \R$, $n \geq 1$ and measurable function $f: \R^n \to \R_+$, we have
\[
  \E_a\left( \sum_{|u|=n} f(V(u_1), \ldots V(u_n)) \right) = \E_a\left( e^{S_n-a} f(S_1,\ldots, S_n) \right),
\]
where $(u_1,\ldots,u_n)$ is the ancestral line of $u$ and $(S_n)_{n \geq 0}$ is a random walk such that $\P_a(S_0=a) = 1$, whose step distribution has density $\frac{\sqrt{2}}{2}e^{-\sqrt{2}|x|}$ with respect to the Lebesgue measure on $\R$.
\end{lemma}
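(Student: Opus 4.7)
The plan is to prove the identity by a direct exponential change-of-measure argument along a single ancestral line, exploiting the explicit forms of the two densities involved. Denote by $\varphi$ the displacement density computed just above the lemma, and $h(y) = \tfrac{\sqrt{2}}{2} e^{-\sqrt{2}|y|}$ the target step density of $S$. The crucial algebraic relation is
\[
  h(y) = 2\, \varphi(y)\, e^{-y}, \qquad y \in \R,
\]
which I would verify by splitting on $\{y<0\}$ and $\{y>0\}$ and checking $2(1-p)(\sqrt{2}+1) = 2p(\sqrt{2}-1) = \tfrac{\sqrt{2}}{2}$; both identities collapse onto $p = \tfrac{2+\sqrt{2}}{4}$. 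Integrating over $\R$ yields $\E(2 e^{-X}) = 1$, the boundary-case condition at parameter $\theta = 1$, which in particular guarantees that $h$ is a probability density.

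Next, reduce the branching sum to an integral over a single line. Along any fixed ancestral line $(u_1,\ldots,u_n)$, the joint law of $(V(u_1),\ldots,V(u_n))$ under $\P_a$ is that of the partial sums $a+X_1, \ldots, a + X_1 + \cdots + X_n$ with i.i.d.\@ $X_i \sim \varphi$. By exchangeability, the $2^n$ leaves in generation $n$ all contribute identically, so
\[
  \E_a\biggl(\sum_{|u|=n} f(V(u_1),\ldots,V(u_n))\biggr) = 2^n \int_{\R^n} f(a+s_1,\ldots,a+s_n) \prod_{i=1}^n \varphi(y_i)\,\dd y_i,
\]
where $s_k := y_1 + \cdots + y_k$. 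Substituting $2\varphi(y_i)\,\dd y_i = e^{y_i} h(y_i)\,\dd y_i$ in each factor of the product turns the right-hand side into $\int_{\R^n} e^{s_n} f(a+s_1,\ldots,a+s_n) \prod_i h(y_i)\,\dd y_i$, which is precisely $\E_a(e^{S_n - a} f(S_1,\ldots,S_n))$.

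The only non-trivial step is the verification of the tilting identity $h = 2\varphi e^{-\cdot}$; everything else is bookkeeping. An equivalent route is an induction on $n$ based on the branching property: the base case $n=1$ is literally the tilting identity, and the inductive step follows by conditioning on the first-generation positions and applying the statement at generation $n-1$ to each independent subtree. Either approach relies exclusively on the very specific bilateral-exponential form of $\varphi$ forced by the boundary-case condition on the BBM.
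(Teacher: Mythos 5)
Your proof is correct. Note that the paper does not actually prove this lemma: it invokes \cite[Theorem~1.1]{Shi}, where the many-to-one formula is established for general branching random walks in the boundary case, with the random walk step law defined abstractly by $\E_0[g(S_1)]=\E_0\bigl[\sum_{|u|=1}e^{-V(u)}g(V(u))\bigr]$ and the identity proved by induction on $n$ via the branching property. Your argument is the same change of measure, but carried out explicitly and in one shot: you expand the generation-$n$ sum into $2^n$ identical path integrals and absorb the factor $2^n$ into the tilting, and --- crucially --- you verify that the tilted density $2\varphi(y)e^{-y}$ really is $\tfrac{\sqrt{2}}{2}e^{-\sqrt{2}|y|}$, which is the one computational fact the paper asserts without proof (the two checks $2(1-p)(\sqrt 2+1)=2p(\sqrt 2-1)=\tfrac{\sqrt 2}{2}$ with $p=\tfrac{2+\sqrt 2}{4}$ are exactly right, and integrating the identity recovers the boundary condition \eqref{eqn:boundary}). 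The only things worth making explicit in a write-up are that the interchange of the sum over $|u|=n$ with the expectation is justified by Tonelli since $f\geq 0$, and that the $2^n$ paths contribute equally because the displacements $(V(u)-V(\pi u))_{u\neq\emptyset}$ are i.i.d., so each ancestral line is a random walk with step density $\varphi$; both are immediate. Your alternative inductive route is also valid and is essentially the proof in \cite{Shi}.
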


As an immediate consequence of the above lemma, we obtain that
\begin{equation}
  \label{eqn:boundary}
  \E_0\left( \sum_{|u|=1} e^{-V(u)} \right) = 1 \quad \text{and} \quad \E_0\left( \sum_{|u|=1} V(u) e^{-V(u)} \right) = 0.
\end{equation}
Therefore, $V$ is a branching random walk in the boundary case, according to the terminology of \cite{BiK05}. We also note that 
\begin{equation}
  \label{eqn:variance}
  \E_0\left( \sum_{|u|=1} V(u)^2 e^{-V(u)} \right) = 1,
\end{equation}
i.e.\@ the step distribution of the random walk $(S_n)$ has unit variance.

We conclude this section with some random walk estimates. Stone's local limit theorem \cite{Sto} implies the existence of $C_0>0$ such that for all $n \in \N^*$,
\begin{equation}
  \label{eqn:localLimit}
  \sup_{z \in \R} \P_0\left( S_n \in [z,z+1] \right) \leq C_0 n^{-1/2}.
\end{equation}

For all $n \in \R$, we set $\underline{S}_n = \min_{k \leq n} S_k$. Estimates on the joint law of $S_n$ and $\underline{S}_n$ are often called ballot theorems in the literature, and often appear in the study of branching random walks. In this article, we need the three following ballot-type estimates: there exists $C_1 > 0$ such that for all $\alpha > 0$ and $n \in \N^*$,
\begin{equation}
  \label{eqn:ballot}
  \P_0\left( \underline{S}_n \geq -\alpha \right) \leq C_1 (1 + \alpha) n^{-1/2},
\end{equation}
there exists $C_2 > 0$ such that for any $h, \alpha > 0$,
\begin{equation}
  \label{eqn:ballotBackward}
  \P_0\left( \underline{S}_n \geq -\alpha, S_n \in [h -\alpha,h-\alpha +1] \right) \leq C_2 (1+h) n^{-1},
\end{equation}
and there exists $C_3>0$ such that for any $a > -\alpha$ and $h > 0$
\begin{equation}
  \label{eqn:estimate}
  \P_0\left( \underline{S}_n \geq -\alpha, S_n \in [a,a+h] \right) \leq C_3 (1 + \alpha)(1 +a + h + \alpha)(1+h) n^{-3/2}.
\end{equation}
These estimates can be obtained as immediate consequences of \cite[Lemma A.1]{AiSsimpleProof}.

\section{Proof of Theorem~\ref{thm:mainboundary}}
\label{sec:stoppingLine}

The proof of Theorem~\ref{thm:mainboundary} is based on the following decomposition of the number $N_x$ of birth events below level $x$ along the stopping line $\mathcal{L}_x$.
\begin{lemma}
\label{lem:stoppingLine}
Let $x \in \R$, we write $Z_x = \# \mathcal{L}_x$ for the total number of particles that hit level $x$ for the first time in their history. We have
\[
  N_x \egaldistr (Z_x-1)\ind{x>0} + \sum_{j =1}^{Z_x} N^{(j)},
\]
where $(N^{(j)}, j \geq 1)$ are i.i.d.\@ copies of $N$ which are further independent of $Z_x$.
\end{lemma}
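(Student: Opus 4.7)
The plan is to cut the genealogical tree of the BBM along the stopping line $\mathcal{L}_x$ and invoke the branching property of Jagers recalled in Section~\ref{sec:backgroud}. Conditionally on the tree stopped at $\mathcal{L}_x$ (in particular on $Z_x$), the subtrees rooted at the $Z_x$ particle-time pairs of $\mathcal{L}_x$ are independent copies of the BBM, each starting from position $x$ at the associated hitting time. By translation invariance in space, the number of birth events below level $x$ in the $j$-th such subtree is distributed as $N = N_0$; writing $N^{(j)}$ for this count, the family $(N^{(j)})_{j \geq 1}$ is i.i.d.\ and independent of $Z_x$, which yields the second term of the claimed decomposition.

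I then partition every birth event counted by $N_x$ into two disjoint categories: (a) those occurring strictly after the corresponding ancestral line has first hit $x$, i.e.\ inside one of the $Z_x$ subtrees dangling from $\mathcal{L}_x$; and (b) those occurring in the stopped tree itself, meaning deaths of particles whose whole ancestral trajectory stayed on the starting side of $x$. Category (a) contributes exactly $\sum_{j=1}^{Z_x} N^{(j)}$ by the previous step, and it remains to identify the contribution $S$ of category (b) with $(Z_x - 1)\ind{x > 0}$.

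For $x \leq 0$: the root starts at $0 \geq x$, and by continuity any particle in the stopped tree has its trajectory strictly above $x$ throughout its lifetime, so its death position lies above $x$ and $S = 0$ almost surely. For $x > 0$: the stopped tree is a finite rooted binary tree in which every internal node has exactly two children; its leaves are the $Z_x$ elements of $\mathcal{L}_x$, and its internal nodes are precisely the particles that died strictly below $x$ without their trajectory ever touching $x$. The elementary identity ``number of internal nodes $=$ number of leaves $-1$'' for such a tree gives $S = Z_x - 1$. Summing the two contributions produces the asserted equality, in fact almost surely and not merely in distribution.

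The lemma is thus essentially a bookkeeping consequence of the stopping-line branching property, so the main point requiring care is the exhaustiveness of the classification (a)/(b): this relies on the continuity of the underlying Brownian motions, which ensures that the first time a given ancestral line reaches $x$ corresponds to a unique element of $\mathcal{L}_x$, and that the zero-probability event of a birth occurring exactly at level $x$ can be ignored. Finiteness of the stopped tree (hence of $Z_x$) for $x > 0$ is given by Kesten's result \cite{Kes78} in the boundary case, so the right-hand side is a.s.\ finite whenever $N_x$ is.
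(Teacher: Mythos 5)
Your proof is correct and follows essentially the same route as the paper: apply Jagers' branching property along the stopping line $\mathcal{L}_x$, observe that the post-line subtrees contribute $\sum_{j=1}^{Z_x} N^{(j)}$, and identify the pre-line contribution as $0$ for $x \leq 0$ and as $\bar{N}_x = Z_x - 1$ for $x > 0$ (the latter being exactly the content of Remark~\ref{rem:linkNZ}). Your additional bookkeeping on the leaf/internal-node count and the continuity of trajectories only makes explicit what the paper leaves implicit.
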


This equality in distribution allows us to link together the law of $N$ with the laws of $N_x$ and $Z_x$. We then determine the asymptotic behaviour as $x \to \infty$ of $N_x$ and $Z_x$, and use those to obtain estimates on the law of $N$.

\begin{remark}
\label{rem:linkNZ}
As the branching Brownian motion splits at each time in exactly $2$ children at every branching event, and no particles stay forever below the level $x>0$, the total number of particles hitting level $x$ for the first time in their history satisfies $Z_x = \bar{N}_x + 1 < \infty$ a.s.\@, with $\bar{N}_x$ the total number of births given by particles before their absorption at level $x$, defined in \eqref{eqn:barN}.
\end{remark}

\begin{proof}
The above equality is an immediate consequence of the branching property applied at the stopping line $\mathcal{L}_x$. Each of the $Z_x$ particles of the stopping line starts an independent branching Brownian motion from level $x$, independently from the branching Brownian motion absorbed at level $x$.

As a consequence, the total number of births below level $x$ is equal to the number of births below level $x$ occurring before hitting $x$ for the first time (which is equal to $0$ if $x < 0$ or to $\bar{N}_x = Z_x-1$ if $x > 0$), summed with the total number of births below level $x$ of all the branching Brownian motions started from $\mathcal{L}_x$, which are equal in distribution to sum of $Z_x$ independent copies of $N_0$.
\end{proof}

As mentioned in Remark~\ref{rem:taub}, the proof of Theorem~\ref{thm:mainboundary} relies on a tight computation of the asymptotic behaviour of the Laplace transform of $N$. For all $x \in \R$ and $\lambda > 0$, we set
\[
  \phi(\lambda,x) = \log \E\left( e^{-\lambda N_x} \right).
\]
By Lemma~\ref{lem:stoppingLine}, we have
\begin{equation}
  \label{eqn:stoppingLine}
  \phi(\lambda,x) =
  \begin{cases}
    \lambda + \log \E\left( \exp\left( \left(\phi(\lambda,0) - \lambda\right)Z_x \right) \right)& \text{ if } x > 0\\
    \E\left( \exp\left( \phi(\lambda,0)Z_x \right) \right) &\text{ if } x < 0.
  \end{cases}
\end{equation}
In other words, the Laplace transform of $N$ can be related to the Laplace transform of $Z_x$ the number of particles first hitting level $x$, or equivalently by Remark~\ref{rem:linkNZ} to the Laplace transform of $\bar{N}_x$ when $x > 0$.

To study the asymptotic behaviour of $\phi(\lambda,0)$ as $\lambda \to 0$, we show that normalized versions of $Z_x$ and $N_x$ both converge, as $x \to \infty$, to multiples of the limit of the derivative martingale of the branching Brownian motion, defined in \eqref{eqn:dinfity}.

First, using stopping line techniques, we obtain an almost sure estimate for the growth rate of $Z_x$ as $x \to \infty$.
\begin{lemma}
\label{fct:asymptoticZ}
We have $\displaystyle \lim_{x \to \infty} xe^{- x} Z_x = D_\infty$ a.s.
\end{lemma}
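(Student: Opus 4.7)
My plan is to combine the branching property applied at the stopping line $\mathcal{L}_y$ (yielding a martingale structure for $Z_x$) with the branching property applied at a fixed time $t$ (matching the result to the natural decomposition of $D_\infty$).

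First, I would use the many-to-one lemma along $\mathcal{L}_x$, or equivalently optional stopping of the additive martingale $W_t = \sum_{u \in \mathcal{N}_t} e^{-X_t(u)}$ at $\mathcal{L}_x$ (justified since $M_t \to +\infty$ a.s., so eventually no particle remains below any fixed level $x > 0$), to get $\E[Z_x] = e^x$. More generally, the branching property at $\mathcal{L}_y$ for $0 \leq y \leq x$ gives
\[
Z_x = \sum_{(v,s) \in \mathcal{L}_y} Z_{x-y}^{(v,s)} \quad \text{a.s.,}
\]
with summands i.i.d.\ copies of $Z_{x-y}$ independent of $\mathcal{F}_{\mathcal{L}_y}$. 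Hence $(e^{-x} Z_x)_{x \geq 0}$ is a non-negative martingale with respect to the filtration generated by the stopping lines, and therefore converges a.s.\ to some limit $\zeta \geq 0$.

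Next, I would decompose at a fixed large time $t$: since $M_t \to +\infty$, for all $x$ sufficiently large (depending on $t$) one has $\max_{u \in \mathcal{N}_t} X_t(u) < x$, and then
\[
Z_x = \sum_{u \in \mathcal{N}_t} Z_{x - X_t(u)}^{(u)},
\]
with the $Z^{(u)}$ i.i.d.\ copies of $Z$ independent of $\mathcal{F}_t$. Multiplying by $xe^{-x}$,
\[
xe^{-x}Z_x = \sum_{u \in \mathcal{N}_t} e^{-X_t(u)} \cdot \frac{x}{x - X_t(u)} \cdot \bigl[(x - X_t(u)) e^{-(x - X_t(u))} Z^{(u)}_{x - X_t(u)}\bigr].
\]
As $x \to \infty$, the factor $x/(x - X_t(u)) \to 1$, and granting the a.s.\ convergence for each independent BBM, the bracketed term tends to $D_\infty^{(u)}$. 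The right-hand side thus converges a.s.\ to $\sum_{u \in \mathcal{N}_t} e^{-X_t(u)} D_\infty^{(u)} = D_\infty$, the last equality obtained by passing to the limit $s \to \infty$ in the identity $D_s = \sum_{u \in \mathcal{N}_t} e^{-X_t(u)} [X_t(u) W_{s-t}^{(u)} + D_{s-t}^{(u)}]$ (branching property at time $t$), using $W_{s-t}^{(u)} \to 0$ a.s.\ (Seneta-Heyde for the additive martingale in the boundary case) together with $D_{s-t}^{(u)} \to D_\infty^{(u)}$ a.s.\ (Lalley-Sellke).

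The main obstacle is the apparent circularity: the identification above assumes the very a.s.\ convergence we want to establish. To break the loop, I would first prove convergence in $\P$-probability, or a.s.\ along a subsequence $x_n = n$, using the quantitative tail bounds on $\bar{N}_x = Z_x - 1$ (for example Maillard's estimate $\P(\bar{N}_x > n) \sim xe^{x}/(n(\log n)^2)$) together with a second-moment / Borel-Cantelli argument, then interpolate to all real $x$ using the martingale structure of $(e^{-x} Z_x)$ and the fact that $Z_x$ varies slowly between integer levels. Alternatively, one could appeal directly to the Seneta-Heyde theorem of Aïdékon-Shi for branching random walks in the boundary case, applied to the BRW that arises from iterating the stopping lines $(\mathcal{L}_k)_{k \in \N}$.
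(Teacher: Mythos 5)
Your overall strategy (a martingale along stopping lines, plus a decomposition at a fixed time to identify the limit as $D_\infty$) is reasonable in spirit, but the plan has genuine gaps exactly at the point you flag as ``the main obstacle'', and they are not minor. First, the second-moment/Borel--Cantelli route cannot work as stated: by Maillard's estimate $\P(\bar{N}_x > n)\sim x e^x/(n(\log n)^2)$, the variable $Z_x$ has infinite second moment (indeed $\E[Z_x\log Z_x]=\infty$), so any second-moment computation must go through a truncation at an $x$-dependent level together with first-moment control of the truncated part; that is essentially the entire technical content of a Seneta--Heyde-type proof and cannot be waved away. Second, and more seriously, the A\"{i}d\'ekon--Shi theorem you invoke gives convergence \emph{in probability} only; for a generic branching random walk in the boundary case the almost-sure analogue of the Seneta--Heyde norming for the additive martingale is false (it has nondegenerate a.s.\ fluctuations). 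So treating $Z_x$ as the additive martingale of the BRW obtained by iterating the lines $\mathcal{L}_k$ can at best deliver convergence in probability, not the a.s.\ statement of the lemma; something specific to this setting is needed, and your sketch does not identify it. Note also that your first paragraph's martingale limit $\zeta$ is $0$ a.s.\ (this is the boundary case), so it contributes nothing to the proof.

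The missing observation --- which is the paper's entire proof --- is that every particle of $\mathcal{L}_x$ sits at position exactly $x$, so that
\[
\sum_{(u,t)\in\mathcal{L}_x} X_t(u)\,e^{-X_t(u)} = x e^{-x} Z_x
\]
is not an additive martingale but the \emph{derivative} martingale evaluated along the stopping line $\mathcal{L}_x$; its a.s.\ convergence to $D_\infty$ is exactly \cite[Theorem~6.1]{BiK04}, and the lemma follows in one line. If you insist on a more self-contained route, the workable version of your plan is: $(Z_n)_{n\in\N}$ is a supercritical Galton--Watson process, so the classical Seneta--Heyde theorem for GW processes gives $Z_n/c_n\to W>0$ a.s.\ for some deterministic $c_n$; a separately established convergence in probability of $ne^{-n}Z_n$ to $D_\infty$ then forces $ne^{-n}c_n$ to converge and identifies $W$ as a multiple of $D_\infty$; finally one interpolates between integer levels using the monotonicity of $x\mapsto Z_x$ (which follows from $Z_{x+\epsilon}=\sum_{j\le Z_x}Z^{(j)}_\epsilon$ with each $Z^{(j)}_\epsilon\ge 1$), not the martingale property of $e^{-x}Z_x$.
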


\begin{proof}
This result is a direct consequence of \cite[Theorem~6.1]{BiK04}, stating that the derivative martingale stopped at line $\mathcal{L}_x$ converges, as $x \to \infty$ to $D_\infty$ a.s. In other words
$\lim_{x \to \infty} \sum_{(u,t) \in \mathcal{L}_x} X_t(u) e^{-X_t(u)} = \lim_{x \to \infty} x e^{-x} Z_x = D_\infty$ a.s.
\end{proof}

We now turn to the asymptotic behaviour, as $x \to \infty$, of
\[
  N_x = \sum_{u \in \mathcal{U}} \ind{X_u(d_u) \leq x} = \sum_{n \geq 0} \sum_{|u|=n} \ind{V(u) \leq x}.
\]
Using estimates developed in Chen \cite{Che20}, we are able to obtain the following asymptotic behaviour for $N_x$ as $x \to \infty$.
\begin{proposition}\label{prop:cvgN}
We have $\displaystyle \lim_{x \to \infty} e^{- x}N_x = 2 D_\infty$ in probability.
\end{proposition}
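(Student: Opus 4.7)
Since $\E[N_x] = +\infty$ for every $x$---the many-to-one formula $\E[N_x] = \sum_n \E[e^{S_n}\ind{S_n \leq x}]$ involves terms of order $n^{-1/2}$, hence diverges---a direct first-moment method cannot conclude. My plan is the truncation strategy of Chen \cite{Che20}: restrict to the ``bulk'' of particles whose ancestral line stays above a deep threshold $-A$, handle this bulk by sharp moment estimates, and control the rare deep excursions separately.

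Concretely, I will introduce
\[ N_x^A := \sum_{u \in \mathcal{U}} \ind{V(u) \leq x,\ \min_{v \preceq u} V(v) > -A}. \]
By the many-to-one lemma (Lemma~\ref{lem:mto}) together with the ballot estimate \eqref{eqn:estimate}, $\E[N_x^A]$ is finite and of order $e^x$. The key analytic input from \cite{Che20} is a Caravenna-type refinement of \eqref{eqn:estimate} giving the sharp first-moment asymptotic
\[ \E[N_x^A] \;\sim\; 2\,\kappa(A)\,e^x \qquad (x \to \infty), \]
where $\kappa(A)$ is built from the renewal functions of the ladder-height processes of the spine walk $(S_n)$ and $\kappa(A) \uparrow \kappa_\infty$ as $A \to \infty$; the prefactor $2$ reflects the normalization $\sigma^2 = 1$ established in \eqref{eqn:variance}.

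To promote this deterministic first-moment asymptotic to the random limit $2 D_\infty$, I will condition on the $\sigma$-algebra $\mathcal{F}_T$ generated by the BBM up to a large time $T$. The branching property decomposes $N_x^A$ into a negligible pre-$T$ term plus $\sum_{u \in \mathcal{N}_T} \widetilde N_{x - X_T(u)}^{(u)}$, with conditionally i.i.d.\ sub-BBM copies. Applying the preceding moment estimate to each sub-BBM and summing yields
\[ \E\bigl[e^{-x} N_x^A \,\big|\, \mathcal{F}_T\bigr] \;\xrightarrow[x\to\infty]{}\; 2\, D_T^{(A)}, \]
where $D_T^{(A)}$ is a truncated version of $D_T = \sum_{u \in \mathcal{N}_T} X_T(u) e^{-X_T(u)}$, using the asymptotically linear growth of the renewal functions to identify the sub-BBM weight with the derivative-martingale summand $X_T(u) e^{-X_T(u)}$. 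Concentration around this conditional mean follows from a matching second-moment bound obtained via a many-to-two lemma and the pairwise ballot estimates \eqref{eqn:ballot}--\eqref{eqn:estimate}. Letting $T \to \infty$ and then $A \to \infty$, the Lalley--Sellke convergence $D_T \to D_\infty$ a.s.\ yields the claim for $N_x^A$.

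Finally, the truncation error $N_x - N_x^A$ is controlled via the stopping-line decomposition at level $-A$: each particle first hitting $-A$ launches an independent sub-BBM, but since $M_t \to +\infty$ a.s.\ the number $Z_{-A}$ of such crossings is a.s.\ finite and vanishes as $A \to \infty$, whence $e^{-x}(N_x - N_x^A) \to 0$ in probability. The main obstacle is precisely the sharp first-moment asymptotic above: identifying the prefactor $2$ requires the delicate Caravenna-type renewal-theoretic refinement of \eqref{eqn:estimate} carried out in \cite{Che20}, as well as checking that the renewal constants align with the Lalley--Sellke derivative-martingale normalization.
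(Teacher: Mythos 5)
Your overall architecture --- truncate below a deep level $-A$, identify the limit of the truncated count as a constant multiple of the derivative martingale limit, and dispose of the truncation error using the a.s.\ finiteness of the all-time minimum --- is the right one, and the last step is fine: on the event $\{\inf_u V(u) > -A\}$, whose probability tends to $1$, one has $N_x = N_x^A$, exactly as in the paper (which works with the event $\{\inf_u V(u)\geq -\alpha\}$). The gap is in the middle: the ``matching second-moment bound'' you invoke to get concentration of $N_x^A$ around its ($\mathcal{F}_T$-conditional) mean does not exist. A many-to-two computation with the ballot estimates \eqref{eqn:ballot}--\eqref{eqn:estimate} gives, for the pairs $u\neq v$ with common ancestor $w$,
\[
\E\bigl[(N_x^A)^2\bigr] \;\gtrsim\; e^{2x}\sum_{m}\E_0\Bigl[e^{-S_m}(1+A+S_m)^2\,\ind{\underline{S}_m\geq -A}\Bigr] \;\asymp\; e^{2x}\,e^{A},
\]
the sum being dominated by ancestors $w$ with $V(w)$ near $-A$, whereas $\bigl(\E[N_x^A]\bigr)^2\asymp A^2e^{2x}$. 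Conditioning on $\mathcal{F}_T$ does not repair this: a sub-BBM rooted at height $z=X_T(u)$ sends a particle down to $-A$ with probability of order $e^{-z-A}$, and on that event produces of order $e^{x+A}$ births below $x$; its variance is therefore of order $e^{2(x-z)}e^{A+z}$, and summing over $u\in\mathcal{N}_T$ leaves a conditional variance of order $e^{2x}e^{A}\bigl((1+A)W_T+D_T\bigr)$, which converges to $e^{A}D_\infty e^{2x}>0$ rather than to $o(e^{2x})$. So Chebyshev gives nothing, for any choice of $T$ and $A$.

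This is precisely the known obstruction in Seneta--Heyde-type results: rare descents back to the barrier inflate the second moment without affecting the limit in probability, which is why A{\"\i}d\'ekon--Shi, Boutaud--Maillard and Chen replace the second-moment method by an $L^1$/conditional-expectation (``peeling'') argument. The paper sidesteps the whole difficulty by importing the convergence \eqref{eqn:che55} from \cite{Che20} for the bulk (generations $n\in[ax^2,bx^2]$, paths staying below $\Lambda x$), and only needs \emph{first}-moment many-to-one bounds (Lemma~\ref{lem:cvgBounds}) for the discarded ranges. To make your proof work you would either have to quote Chen's result as a black box (as the paper does) or reproduce the peeling/$L^1$ machinery; asserting a second-moment bound will not do. Your identification of the constant $2$ and your treatment of $N_x-N_x^A$ are otherwise consistent with the paper.
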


This convergence can be thought of as a Seneta-Heyde type result for the additive martingale of the branching random walk $V$. A similar convergence was obtained in \cite[Eq. (5.5)]{Che20}, using similar methods as the one pioneered by Boutaud and Maillard \cite{BM}. Precisely, for all $0 < a < b$ and $\Lambda >  1$, we have
\begin{multline}
  \label{eqn:che55}
  \lim_{x \to \infty} e^{-x}\sum_{a x^2 \leq n \leq bx^2} \sum_{|u|=n} \ind{V(u) \leq x, \max_{k \leq n} V(u_k) \leq \Lambda x}\\
   = \sqrt{\tfrac{2}{\pi}} D_\infty \int_a^b   \phi(u^{-1/2}) g(\Lambda u^{-1/2}, u^{-1/2})\frac{\dd u}{u} \quad \text{in probability},
\end{multline}
where $\phi(z) = z e^{-z^2/2}$ and $g(a,b) = \P(\sup_{s \in [0,1]} R_s \leq a | R_1 = b)$, with $R$ a Bessel process of dimension $3$. Here we used \cite[Lemma~2.1]{AiS14} to identify the constant $\sqrt{\tfrac{2}{\pi}}$.
To complete the proof of Proposition~\ref{prop:cvgN}, we use the following lemma, whose proof is postponed to the end of the article.

\begin{lemma}
\label{lem:cvgBounds}
For $u \in \mathcal{U}$, we set $\underline{V}(u) = \min_{k \leq |u|} V(u_k)$, $\bar{V}(u) = \max_{k \leq |u|} V(u_k)$.
For all $\alpha > 0$ and $0 < a < b$, we have
\begin{equation}
  \label{eqn:middlePart}
  \lim_{\Lambda \to \infty} \limsup_{x \to \infty} e^{-x} \E_0\left( \sum_{ax^2 \leq n \leq bx^2} \sum_{|u|=n} \ind{\underline{V}(u) \geq -\alpha, \bar{V}(u) \geq \Lambda x, V(u) \leq x} \right) = 0.
\end{equation}
Additionally, for all $\alpha > 0$,
\begin{align}
  \lim_{\delta \to 0} \limsup_{x \to \infty} e^{-x}\E_0\left[\sum_{n\leq \delta x^2}\sum_{|u|=n}\ind{V(u)\leq x,\underline{V}(u)\geq-\alpha}\right]=&0, \label{badsmallgeneration}\\
  \text{and}\quad \lim_{\delta \to 0} \limsup_{x \to \infty} e^{-x}\E_0\left[\sum_{n\geq  x^2/\delta}\sum_{|u|=n}\ind{V(u)\leq x,\underline{V}(u)\geq-\alpha}\right]=&0, \label{badlargegeneration}
\end{align}
\end{lemma}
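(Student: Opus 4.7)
The plan is to apply the many-to-one lemma (Lemma~\ref{lem:mto}) to reduce each statement to a bound on an expectation of the form $\E_0(e^{S_n}\ind{\cdot})$ for the centred random walk $S$ of Section~\ref{sec:backgroud}, and then to slice the range of $S_n$ into unit intervals, combining the ballot estimates \eqref{eqn:ballot}--\eqref{eqn:estimate} with Chernoff-type bounds to extract the Gaussian decay needed in the large-deviation regime.

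For \eqref{badlargegeneration}, slicing and applying \eqref{eqn:estimate} directly gives $\P_0(\underline{S}_n\geq-\alpha,\,S_n\in[k,k+1]) \leq C(\alpha)(1+k+\alpha)n^{-3/2}$; the weight $e^{k+1}$ from the many-to-one formula concentrates the sum near $k=x$, yielding $e^{-x}\E_0[\cdot] \leq C(\alpha)\,x\,n^{-3/2}$. Summing over $n\geq x^2/\delta$ then gives $C(\alpha)\sqrt{\delta}$, which vanishes as $\delta\to 0$.

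For \eqref{badsmallgeneration}, the bare estimate \eqref{eqn:estimate} is too weak: its contribution sums to $O(x)$, diverging in $x$. The main task is to sharpen \eqref{eqn:estimate} into
\[
  \P_0\bigl(\underline{S}_n\geq-\alpha,\,S_n\in[k,k+1]\bigr) \leq C(\alpha)(1+k+\alpha)\,n^{-3/2}\,e^{-k^2/(2n)},
\]
by using \eqref{eqn:estimate} for $k\leq C\sqrt{n}$ and an exponential Chernoff bound for $k>C\sqrt{n}$, combined via a bridge-type argument for the barrier constraint. Completing the square $y-y^2/(2n) = -(y-n)^2/(2n)+n/2$ and Laplace-evaluating the integral $\int_0^x e^{y-y^2/(2n)}(1+y)\diff y$ then gives: for $n\leq x$ a per-$n$ bound $\lesssim e^{n/2-x}$ that sums geometrically to $o(1)$ in $x$; for $x<n\leq \delta x^2$ a Mills-ratio estimate $e^{-x}\E_0[\cdot] \lesssim C(\alpha)(1+x)\,e^{-x^2/(2n)}/(\sqrt{n}\,(n-x))$, whose sum, after the substitution $u=x^2/(2n)$, becomes $\lesssim C(\alpha)\int_{1/(2\delta)}^{\infty} u^{-1/2}e^{-u}\diff u$ and vanishes with $\delta$.

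For \eqref{eqn:middlePart}, condition on the first time $\tau\leq n$ at which $S$ crosses level $\Lambda x$. By the Markov property the event splits into a "before" piece with $\underline{S}_\tau\geq -\alpha$ and $S_\tau\geq \Lambda x$ (controlled by the enhanced ballot estimate) and an independent "after" walk of length $n-\tau$ that must finish at height $\leq -(\Lambda-1)x$ (controlled by Chernoff). Laplace's method in $\tau$, with minimum near $\tau\approx n/2$, combines the two Gaussian factors into $\leq e^{-2\Lambda^2/b}$ uniformly for $n\leq bx^2$; once $\Lambda$ is chosen large enough that $2\Lambda^2/b>1$, the per-$n$ bound is exponentially small in $x$, uniformly in $n\in[ax^2,bx^2]$, and so the double limit vanishes. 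The crux of the argument is thus the Gaussian-enhanced ballot estimate above: without the exponential factor $e^{-k^2/(2n)}$, the slicing method diverges polynomially in $x$ in both the small-$n$ and high-excursion regimes.
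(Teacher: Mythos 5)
Your treatment of \eqref{badlargegeneration} is correct and is exactly the paper's argument (many-to-one, unit slicing, \eqref{eqn:estimate}, the weight $e^{k}$ concentrating the sum at $k\approx x$, and $\sum_{n\geq x^2/\delta}n^{-3/2}\asymp\sqrt{\delta}/x$). The other two parts contain genuine gaps. For \eqref{badsmallgeneration}, the ``sharpened'' estimate $\P_0(\underline{S}_n\geq-\alpha,\,S_n\in[k,k+1])\leq C(\alpha)(1+k+\alpha)n^{-3/2}e^{-k^2/(2n)}$ is false: the step of $S$ has density $\tfrac{\sqrt2}{2}e^{-\sqrt2|x|}$, so its tails are exponential, and for $k\gg n$ (already for $n=1$ and $k$ large) the left-hand side is of order $e^{-\sqrt2 k}$, which is far larger than $e^{-k^2/(2n)}$. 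No Chernoff bound can deliver a Gaussian exponent outside the Cram\'er regime $k=O(n)$, since the rate function of $S_1$ is asymptotically linear, $I(y)\sim\sqrt2\,y$. This failure occurs precisely where you use the bound, namely $n\leq x$ with $y$ near $x$ in your integral $\int_0^x e^{y-y^2/(2n)}(1+y)\diff y$: there $y\gg n$. The regime is still harmless, because $\sqrt2>1$ lets the linear Chernoff rate beat the weight $e^{k}$ — this is exactly how the paper treats $n\leq br$, via $\sum_{n\leq br}\P(S_n\geq r)\leq e^{-cr}$ — but your write-up does not establish it, and even inside the Cram\'er regime the constant $\tfrac12$ in the exponent is not free, while your ``sums geometrically'' step for $n\leq x$ is sensitive to that constant.

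The gap in \eqref{eqn:middlePart} is more serious. Reaching height $\Lambda x$ at a time $\tau\asymp n\asymp x^2$ has Gaussian cost $e^{-(\Lambda x)^2/(2\tau)}\asymp e^{-c\Lambda^2}$, which is a \emph{constant} in $x$; your claim that the per-$n$ bound is ``exponentially small in $x$'' (and the threshold $2\Lambda^2/b>1$) would be appropriate if $n$ were of order $x$, not $x^2$. Since the sum runs over $\asymp(b-a)x^2$ values of $n$, and slicing the constraint $S_n\leq x$ produces a factor $e^{x}$ cancelling the prefactor $e^{-x}$, a per-$n$ bound that is merely $O(e^{-c\Lambda^2})$ yields $\limsup_{x\to\infty}(\cdot)=+\infty$ for every fixed $\Lambda$. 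What is needed is a per-$(n,j)$ bound of the form $\epsilon(\Lambda)(1+j)/n$ with $\epsilon(\Lambda)\to0$, and the decomposition over the first crossing time $\tau$ with product-of-marginals bounds does not produce it: the factors $\tau^{-1/2}$ (ballot) and $(n-\tau)^{-1/2}$ (local limit) are absorbed by the sum over $\tau$ (note $\sum_{\tau}(\tau(n-\tau))^{-1/2}=O(1)$), leaving only a constant. The paper recovers the missing $1/n$ by splitting into just two blocks according to whether the crossing occurs before or after time $p=\floor{n/2}$, applying \eqref{eqn:ballot} (resp.\ \eqref{eqn:ballotBackward} after time reversal) on a full half of the walk and \eqref{eqn:localLimit} on the other half, and extracting the $\Lambda$-dependence from $\P_0(\bar{S}_p\geq\Lambda(n/b)^{1/2}\mid\underline{S}_p\geq-\alpha)\to\P(\max_{s\in[0,1]}R_s\geq\Lambda(2/b)^{1/2})$ via the Caravenna--Chaumont invariance principle. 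Some device of this kind is indispensable, and your sketch does not supply it.
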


\begin{proof}[Proof of Proposition~\ref{prop:cvgN}]
For all $0 \leq a < b \leq \infty$, we set
\[
  N_x(a,b) = \sum_{ax^2 \leq n \leq bx^2} \sum_{|u|=n} \ind{V(u) \leq x} \text{ and } c_{a,b}= \sqrt{\tfrac{2}{\pi}} \int_a^b \phi(u^{-1/2}) \frac{\dd u}{u}.
\]
Let $\epsilon > 0$, by \eqref{eqn:maxDep}, there exists $\alpha > 0$ such that $\P_0(\inf_{u \in \mathcal{U}} V(u) \leq -\alpha) < \epsilon$. Additionally, using \eqref{badsmallgeneration}, \eqref{badlargegeneration} and the Markov inequality, we can fix $A_1, \delta > 0$ such that for all $x \geq A_1$,
\[
  \P_0\left( N_x(0,\delta) + N_x(\delta^{-1},\infty) \geq \epsilon e^{x}\right) \leq \epsilon + \P(\inf_{u \in \mathcal{U}} V(u) \geq -\alpha) \leq 2\epsilon.
\]
Up to decreasing $\delta$, we assume as well that $0 \leq c_{0,\infty} - c_{\delta,\delta^{-1}} \leq \epsilon$. Similarly, using \eqref{eqn:middlePart}, we may fix $A_2 \geq A_1$ and $\Lambda > 1$ such that for all $x \geq A_2$, we have
\[
  \P_0\left( N_x(\delta,\delta^{-1}) - N_x(\delta,\delta^{-1},\Lambda) \geq \epsilon e^x \right) \leq 2 \epsilon,
\]
where $\displaystyle N_x(a,b,\Lambda)= \sum_{|u|=n} \ind{V(u) \leq x, \bar{V}(u) \leq \Lambda x}$. Up to enlarging again $\Lambda$, we also assume that
$\displaystyle  0 \leq c_{\delta,\delta^{-1}} - \sqrt{\tfrac{2}{\pi}} \int_{\delta}^{\delta^{-1}}   \phi(u^{-1/2}) g(\Lambda u^{-1/2}, u^{-1/2})\frac{\dd u}{u} \leq \epsilon.$
Finally, using the convergence \eqref{eqn:che55}, we can choose $A_3 \geq A_2$ such that for all $x \geq A_3$,
\[
  \P_0\left( \left| e^{-x}N_x(\delta,\delta^{-1},\Lambda) - c_{0,\infty} D_\infty \right| \geq (2D_\infty + 1) \epsilon \right) \leq \epsilon.
\]
As a result, for all $x \geq A_3$, chaining these equations we obtain
\[
  \P_0\left( \left| e^{-x}N_x - c_{0,\infty} D_\infty \right| \geq (3 + 2D_\infty) \epsilon \right) \leq 5 \epsilon,
\]
proving that $\lim_{x \to \infty} e^{-x} N_x = c_{0,\infty} D_\infty$ in $\P_0$-probability, as $D_\infty$ is a.s.\@ finite.

Next, using that $V(\emptyset)$ is independent of $(V(u)-V(\emptyset), u \in \mathcal{U})$, which has law $\P_0$, and that for all $a \in \R$, the law of $D_\infty$ under $\P_a$ is the same as the law of $e^a D_\infty$ under law $\P_0$, we also obtain that
\[
  \lim_{x \to \infty} e^{-x} N_x = c_{0,\infty} D_\infty \quad \text{ in $\P$-probability}.
\]
By computing that $c_{0,\infty} = \sqrt{\frac{2}{\pi}} \int_0^\infty \phi(u^{-1/2}) \frac{\dd u}{u} = 2$,  the proof is now compete.
\end{proof}

Next, using Lemma~\ref{fct:asymptoticZ} and Proposition~\ref{prop:cvgN}, we are now able to prove Theorem~\ref{thm:mainboundary}.
\begin{proof}[Proof of Theorem~\ref{thm:mainboundary}]
By Lemma~\ref{fct:asymptoticZ}, for all $\lambda > 0$ we have
\[
  \lim_{x \to \infty} \E\left( e^{-\lambda xe^{-x} Z_x} \right) = \E(e^{-\lambda D_\infty}).
\]
Recall that $\phi(\lambda,x) = \log \E(e^{-\lambda N_x})$, and as $N+1$ is a.s.\@ positive, the continuous function $\lambda \mapsto \lambda - \phi(\lambda,0)$ is increasing. Hence for all $x > 0$ large enough, there exists a unique $\lambda_x > 0$ so that
\[
  \lambda_x - \phi(\lambda_x,0)= x e^{- x},
\]
with $\lambda_x \to 0$ as $x \to \infty$. Then, equation \eqref{eqn:stoppingLine} implies that
\[
  \lim_{x \to \infty} \E(e^{-\lambda_x (N_x - 1)}) = \E(e^{-D_\infty}).
\]
On the other hand, it is known from Proposition~\ref{prop:cvgN} that for all $\mu > 0$,
\[
  \lim_{x \to \infty} \E(e^{-\mu e^{-  x} (N_x - 1)}) = \E(e^{-2 \mu D_\infty}).
\]

As a result, we conclude that $e^{- x} \sim 2 \lambda_x$ as $x \to \infty$. It yields in particular that $2 \phi(\lambda_x,0) = 2 (\lambda_x -x e^{-x}) = (-2 x + 1 +o(1))e^{-x}$ as $x \to \infty$, i.e.\@ as $\lambda_x \to 0$. As a result, we obtain that as $\lambda \to 0$, we have
\begin{align*}
  \phi(\lambda,0) &= \lambda (\log (2 \lambda) + 1 + o(1))\\
  &= \lambda \log \lambda + (1 + \log 2) \lambda + o(\lambda) \quad \text{as $\lambda \to 0$}.
\end{align*}
By Remark~\ref{rem:taub}, we deduce
\[
  \E\left( N \ind{N \leq n} \right) = \log n  + (\log 2 + \gamma) + o(1),
\]
as $n \to \infty$, which completes the proof of the main theorem.
\end{proof}

We end this article with a proof of Lemma~\ref{lem:cvgBounds}, which is based on the many-to-one lemma and random walk estimates.
\begin{proof}[Proof of Lemma~\ref{lem:cvgBounds}]
We prove each of the three limits in turn, using the ballot-type random walk estimates introduced in Section~\ref{sec:stoppingLine}.

\textbf{Proof of \eqref{eqn:middlePart}.} For all $n \in \N$, we set $\bar{S}_n = \max_{k \leq n} S_k$. Let $0 < a < b$, using the many-to-one lemma, we compute for all $\alpha, x > 0$ and $\Lambda>1$,
\begin{align}
  &\E_0\left( \sum_{a x^2 \leq n \leq bx^2} \sum_{|u|=n} \ind{\underline{V}(u) \geq -\alpha, \bar{V}(u) \geq \Lambda x, V(u) \leq x} \right)\nonumber\\
  = &\sum_{a x^2 \leq n \leq bx^2} \E_0\left( e^{S_n} \ind{\underline{S}_n \geq -\alpha, \bar{S}_n \geq \Lambda x, S_n \leq x} \right)\nonumber\\
  \leq &\sum_{ax^2\leq n \leq bx^2} \sum_{j = 0}^{\infty} e^{x - j} \P_0\left(\underline{S}_n \geq -\alpha, \bar{S}_n \geq \Lambda x, S_n \in [x - j-1, x - j] \right)\nonumber\\
  \leq &\frac{e^x}{1 - e^{-1}} (bx^2 - ax^2 + 1)\sup_{\substack{n \geq ax^2,\\h \leq (n/a)^{1/2}}} \P_0\left( \underline{S}_n \geq -\alpha, \bar{S}_n \geq \Lambda (n/b)^{1/2}, S_n \in [h-1,h] \right).\label{eqn:ub}
\end{align}
We then bound $\P_0\left( \underline{S}_n \geq -\alpha, \bar{S}_n \geq \Lambda (n/b)^{1/2}, S_n \in [h-1,h] \right)$ for large values of $\Lambda$, uniformly in $h \leq (n/a)^{1/2}$. 

Write $T^{(n)} = \inf\{ k \in \N : S_k \geq \Lambda (n/b)^{1/2}\}$, we observe that, setting $p = \floor{n/2}$,
\begin{multline}
  \label{eqn:splitTime}
\P_0\left( \underline{S}_n \geq -\alpha, \bar{S}_n \geq \Lambda (n/b)^{1/2}, S_n \in [h-1,h] \right)\\
\leq \P_0\left( \underline{S}_n \geq -\alpha, T^{(n)} \leq p, S_n \in [h-1,h] \right)\\ + \P_0\left( \underline{S}_n \geq -\alpha, T^{(n)}\in ]p,n], S_n \in [h-1,h]  \right),
\end{multline}
and we bound these two probabilities in turn. Applying the Markov property at time $p$, we have
\begin{multline*}
  \P_0\left( \underline{S}_n \geq -\alpha, T^{(n)} \leq p, S_n \in [h-1,h] \right)\\
   \leq \P_0\left(\underline{S}_{p} \geq - \alpha, \bar{S}_{p} \geq \Lambda(n/b)^{1/2}\right)  \sup_{z \in \R} \P(S_{n- p} \in [z-1,z])\\
   \leq \frac{C_0 C_1 (1 + \alpha)}{p^{1/2} (n-p)^{1/2}} \P_0\left( \bar{S}_p \geq \Lambda (n/b)^{1/2} \middle| \underline{S}_p \geq -\alpha  \right),
\end{multline*}
using \eqref{eqn:localLimit} and \eqref{eqn:ballot}. Therefore, there exists $C'>0$ such that for all $n \in \N$, we have
\begin{multline*}
  \P_0\left( \underline{S}_n \geq -\alpha, \bar{S}_p \geq \Lambda (n/b)^{1/2}, S_n \in [h-1,h] \right)\\ \leq \frac{C'}{n} \P_0\left( \bar{S}_p \geq \Lambda (n/b)^{1/2} \middle| \underline{S}_p \geq -\alpha  \right).
\end{multline*}
Given $R$ a Bessel process of dimension 3, by Caravenna-Chaumont's invariance principle \cite[Theorem~1.1]{CaC08}, we have
\[
  \lim_{n \to \infty} \P_0\left( \bar{S}_p \geq \Lambda (n/b)^{1/2} \middle| \underline{S}_p \geq -\alpha  \right) = \P(\max_{s \in [0,1]} R_s \geq \Lambda (2/b)^{1/2}),
\]
which converges to $0$ as $\Lambda \to \infty$. Hence, we conclude that
\begin{equation}
 \label{eqn:firstPart}
 \lim_{\Lambda \to \infty} \limsup_{n \to \infty} \left[ n \sup_{h \leq (n/a)^{1/2}} \P_0\left( \underline{S}_n \geq -\alpha, T^{(n)} \leq p, S_n \in [h-1,h] \right)\right] = 0.
\end{equation}

Next, observing that $(S_{n} - S_{n-k}, k \leq n) \egaldistr (S_k, k \leq n)$ by reversing time, for all $0 \leq h \leq (n/a)^{1/2}$, we have
\begin{align*}
  &\P_0\left( \underline{S}_n \geq -\alpha, \exists k \in ]p,n] : S_k \geq \Lambda (n/b)^{1/2}, S_n \in [h-1,h]  \right)\\
  \leq &\P_0\left( \max_{n-p \leq j \leq n} S_j \leq \alpha + h, \underline{S}_{n-p} \leq -\Lambda (n/b)^{1/2} + h, S_n \in [h-1,h] \right)\\
  \leq &\P_0\left( \underline{S}_{n-p} \leq -\Lambda(n/b)^{1/2} + (n/a)^{1/2} \right) \sup_{z \in \R} \P\left( \bar{S}_p \leq \alpha + z, S_{p} \in [z-1,z]  \right),
\end{align*}
applying the Markov property at time $n-p$. Then applying \eqref{eqn:ballotBackward} to the random walk $-S$, there exists $C'' > 0$ such that
\begin{multline*}
  \sup_{z \in \R} \P\left( \bar{S}_p \leq \alpha + z, S_{p} \in [z-1,z]  \right)\\
  = \sup_{z \in \R} \P\left( \underline{-S}_p \geq -\alpha - z, -S_p \in [-z,-z+1] \right) \leq \frac{C''(1+\alpha)}{n},
\end{multline*}
and by Donsker's invariance principle,
\[
  \lim_{n \to \infty} P_0\left( \underline{S}_{n-p} \leq -\Lambda(n/b)^{1/2} + (n/a)^{1/2} \right) = \P(\inf_{s \in [0,1]} B_s \leq - \Lambda(2/b)^{1/2} + (2/a)^{1/2}),
\]
where $B$ is a Brownian motion. As a result, we deduce that
\begin{equation}
 \label{eqn:secondPart}
 \lim_{\Lambda \to \infty} \limsup_{n \to \infty} \left[n \sup_{h \leq (n/a)^{1/2}} \P_0\left( \underline{S}_n \geq -\alpha, T^{(n)} \in ]p,n], S_n \in [h-1,h] \right)\right] = 0.
\end{equation}

Then, plugging \eqref{eqn:firstPart} and \eqref{eqn:secondPart} into \eqref{eqn:splitTime}, we deduce that
\[
  \lim_{\Lambda \to \infty} \limsup_{x \to \infty} \left[x^2 \sup_{\substack{n \geq ax^2,\\h \leq (n/a)^{1/2}}} \P_0\left( \underline{S}_n \geq -\alpha, \bar{S}_n \geq \Lambda (n/b)^{1/2}, S_n \in [h-1,h] \right)\right] = 0,
\]
which, going back to \eqref{eqn:ub} is enough to prove \eqref{eqn:middlePart}.

\textbf{Proof of \eqref{badlargegeneration}.} Using the many-to-one lemma, we have
\begin{align*}
&\E_0\left[\sum_{n\geq  x^2/\delta}\sum_{|u|=n}\ind{V(u)\leq x,\underline{V}(u)\geq-\alpha}\right]=\sum_{n\geq x^2/\delta}\E_0[e^{S_n}; S_n\leq x, \underline{S}_n\geq-\alpha]\\
\leq & \sum_{n\geq x^2/\delta}\sum_{k = \floor{-\alpha}}^\floor{x} e^{k+1}\P_0(\underline{S}_n\geq-\alpha, S_n\in[k,k+1])\\
\leq &  \sum_{n\geq x^2/\delta}\frac{C(1+\alpha)}{n^{3/2}} \sum_{k=0}^{x+\alpha} e^{k-\alpha}(1+k)
\end{align*}
by \eqref{eqn:estimate}. As a result, we obtain that
\[
  e^{-x} \E_0\left[\sum_{n\geq  x^2/\delta}\sum_{|u|=n}\ind{V(u)\leq x,\underline{V}(u)\geq-\alpha}\right] \leq C (1+\alpha)(1+x+\alpha) \frac{\sqrt{\delta}}{x}
\]
which is $o_\delta(1)(1+\alpha)$ as $\delta\downarrow 0$, uniformly in $x \geq 1$.

\textbf{Proof of \eqref{badsmallgeneration}.}
This proof is similar to the proof of \eqref{badlargegeneration}, using the same lines as in the proof of \cite[Equation (A.19)]{Che20}. We first use the many-to-one lemma to write 
\begin{align}
  \E_0\left[\sum_{n\leq \delta x^2}\sum_{|u|=n}\ind{V(u)\leq x,\underline{V}(u)\geq-\alpha}\right]&= \sum_{n\leq \delta x^2}\E_0[e^{S_n}; S_n\leq x,\underline{S}_n\geq-\alpha]\nonumber\\
  &\leq \sum_{n=1}^{\delta x^2} \sum_{r=\floor{-\alpha}}^{\floor{x}} e^{r+1}\P_0(S_n\in[r,r+1], \underline{S}_n\geq-\alpha), \label{eqn:formula2}
\end{align}
and we bound $\P_0(S_n \in [r,r+1], \underline{S}_n \geq -\alpha)$ uniformly in $n \leq \delta x^2$ and $r \leq x$. To do so, we recall the following inequality, first proved in \cite{Andreoletti-Roland} for random walks with bounded increments: as $\E[e^{\eta|S_1|}]<\infty$ for some $\eta>0$, there exist $0<a<1<b<\infty$ such that for  $r\geq 1$ and $br\leq n\leq ar^2$, 
\[
\P_0(\underline{S}_n\geq-\alpha, S_n\in[r,r+1])\leq C\frac{1+\alpha}{n}e^{-c'\frac{r^2}{n}}
\]
It comes from the fact that $\P_0(S_k\in[r,r+1])\leq \frac{1}{\sqrt{k}}e^{-cr^2/k}$ for all $br\leq k\leq ar^2$. On the other hand, if $1\leq n\leq br$ and $0<t<\eta$, one has
\[
\sum_{n=1}^{br}\P_0(\underline{S}_n\geq-\alpha, S_n\in[r,r+1])\leq \sum_{n=1}^{br}\P(S_n\geq r)\leq e^{-t r}\sum_{n=1}^{br}\E[e^{tS_1}]^n\leq e^{-c r}.
\]
Consequently,
\[
\sum_{n=1}^{\delta r^2}\P_0(\underline{S}_n\geq-\alpha, S_n\in[r,r+1])\leq c(1+\alpha)\delta,
\]
which by \eqref{eqn:formula2} yields
$\displaystyle
  e^{-x}\E_0\left[\sum_{n\leq \delta x^2}\sum_{|u|=n}\ind{V(u)\leq x,\underline{V}(u)\geq-\alpha}\right] \leq C \delta (1+\alpha),
$
completing the proof of \eqref{badsmallgeneration}.
\end{proof}

\paragraph{Acknowledgements.}
We are grateful to the anonymous referee for the relevant comments on the first version of this article. The authors were partially supported by ANR grant MALIN
(ANR-16-CE93-0003) during the redaction of this article.

\end{document}